\documentclass[reqno,12pt]{amsart}
\usepackage{latexsym,graphicx,epsfig,graphics}
\usepackage{amsmath, amsthm, amssymb}
\usepackage{pgf,tikz}
\usepackage{url}
\usepackage{verbatim}
\usepackage{enumerate}
\usepackage{rotating}
\usepackage{array}
\usepackage{longtable}
\usepackage{calc}
\usepackage{multirow}
\usepackage{hhline}
\usepackage{ifthen}
\usepackage{amsthm}
\usepackage{mathrsfs}
\usepackage{amsfonts}
\usepackage{eucal}

\usetikzlibrary{arrows,topaths,calc}
\definecolor{sqsqsq}{rgb}{0.13,0.13,0.13}
\textwidth=16.5cm
\textheight=22cm
\hoffset=-2cm
\voffset=-2cm
\topmargin 10mm
\evensidemargin 20mm
\oddsidemargin 20mm
\tolerance1000

\theoremstyle{plain}
\newtheorem{theorem}{Theorem}[section]
\newtheorem{proposition}[theorem]{Proposition}
\newtheorem{corollary}[theorem]{Corollary}
\newtheorem{lemma}[theorem]{Lemma}

\theoremstyle{definition}
\newtheorem{definition}[theorem]{Definition}

\newtheorem{example}[theorem]{Example}
\newtheorem{conjecture}[theorem]{Conjecture}

\numberwithin{equation}{section}
\title{An introduction to the mincut graph}

\author{C. Kriel$^{\S}$ \and E. Mphako-Banda$^{\S}$ }
\begin{document}

\date{}
\maketitle
 
\centerline{$^\S$ School of Mathematics,}
\centerline{University of the Witwatersrand, PB 3, WITS, 2050,
RSA.}
\centerline{ Email: christo.kriel@wits.ac.za; eunice.mphako-banda@wits.ac.za}

\begin{abstract}
There is a well-documented research programme on graph operators which addresses questions such as `Which graphs appear as images of graphs?'; `Which graphs are fixed under the operator?'; `What happens if the operator is iterated?' In this paper, we introduce an intersection graph called the mincut graph and state its characteristics. We address some of the research programme questions for the mincut graph operator and conclude by stating further research questions on the mincut graph.

\par
\vspace{.1in}
Keywords: Connectivity, edge-cut set, mincut, intersection graph, graph operator.
\par
\vspace{.1in}
Subject Classification: 05C40, 05C70, 05C76

\end{abstract}

\section{Introduction}
\label{sec:intro}
Given a set $S$ and a family $F=\{S_1, S_2 \ldots , S_i\}$ of subsets of $S$, an intersection graph of $F$ is a graph with vertices $v_i$ corresponding to each of the $S_i$ and two vertices $v_i$ and $v_j$ are adjacent if $S_i\cap S_j \neq \emptyset$, see \cite{graphreps, grtheory}. In 1945 Szpilrajn-Marczewski proved that every graph is an intersection graph, \cite{graphreps}. One of the first class of intersection graphs to be widely studied was the line graph, generalised as $(X,Y)$-intersection graphs in \cite{linegrgen}, while in the $1970$'s chordal graphs were first characterised in terms of intersection graphs. Other intersection graphs that are studied intensively are interval and circular-arc graphs, competition graphs, $p$-intersection and tolerance graphs, to name but a few, see \cite{algogr,intersectiongr,intersecintro}. Problems involving intersection graphs often have real world applications in topics like biology, computing, matrix analysis and statistics, see \cite{algogr,intersectiongr}.

In this paper, we introduce an intersection graph of a graph $G$, called a \emph{mincut graph}, with vertex set the minimum edge-cuts of $G$ such that two vertices in the intersection graph are adjacent if their corresponding minimum edge-cut sets have non-empty intersection. We then study some of its properties and characteristics. In doing so we follow the research programme on graph operators, as introduced by Prisner in the 1995 monograph ``Graph Dynamics'', see \cite{grdynamics}.

As the line graph of a graph $G$ reflects the mutual positions of the edges, see \cite{prissurv}, so the mincut graph reflects the mutual positions of the minimum edge-cuts. We believe that this new structure can be used in examining the effect of certain graph parameters on connectivity and possibly also lead to the identification of some new connectivity parameters. For example, it is known that the maximum number of mincuts has different order of magnitude for graphs with odd and even edge connectivity, see \cite{kanevsky}. Let $G$ be a simple graph with minimum edge-cut number $\lambda$ and $n$ vertices, with $X=\{X_1,\, X_2, \, \ldots\, ,X_i\}$ the set of mincuts of $G$, then we have the following upper bounds for $|X|$, the number of minimum cuts of $G$, see  \cite{lehel}:
\begin{equation*}
	|X|\leq\begin{cases}
		\frac{2n^2}{(\lambda+1)^2}+\frac{(\lambda-1)n}{\lambda+1}, & \text{if $\lambda \geq 4$ and $\lambda$ is an even integer},\\
		(1+\frac{4}{\lambda+5})n, & \text{if $\lambda>5$ and $\lambda$ is an odd integer}.
	\end{cases}
\end{equation*}

The problem of counting the number of mincuts of a graph is well known in the literature and various data structures have been created  to represent all these mincuts, see for example \cite{dinic,fleischer,gabow}. In the mincut graph, the bounds on $|X|$ become bounds on the order of the mincut graph, since every minimum edge-cut is a vertex in the mincut graph. The effect on these upper bounds of certain properties of graphs such as \emph{radius and diameter} or \emph{maximum and minimum degree} are investigated in \cite{chandran}. Although outside the scope of this paper it would be interesting to know whether these upper bounds are related to or have similar effects on parameters in the mincut graph.

\section{Introducing the mincut graph}
\label{sec:mincutgraph}

\begin{definition}
Let $G$ be a simple connected graph, then an edge-cut of $G$ is a subset $X$ of $E(G)$, such that $G-X$ is disconnected. An edge-cut of minimum cardinality in $G$ is a \emph{minimum edge-cut} and its cardinality is the edge-connectivity of $G$, denoted $\lambda(G)$. We will call such a minimum edge-cut a \emph{mincut} of $G$. We call a mincut that disconnects a single vertex from the graph a \emph{trivial cut}.
\end{definition}

\begin{example} We illustrate this concept with an example. The diagrams in Figure \ref{fig:egmincut} are the wheel graph, $W_6$, and the Peterson graph, both with minimal edge cuts labeled $\{e_1 \, , \ldots \, , \, e_5 \}$. However, $\{e_1 \, , \ldots \, , \, e_5 \}$ is not a mincut for either of the graphs, since in each of the graphs the removal of any of the three edges incident on a single vertex of degree $3$  will also give a disconnected graph. 

\begin{figure}[!h]
	\begin{center}
		\includegraphics[scale=1.5]{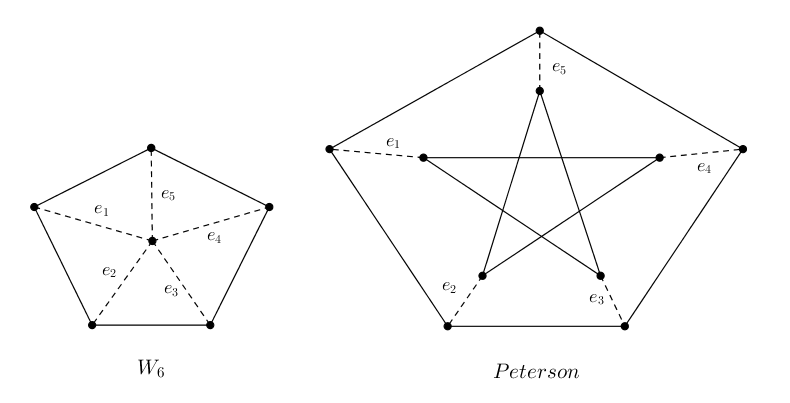}
\end{center}
	\caption{The edge sets $\{e_1 \, , \ldots \, , \, e_5 \}$ are minimal, but not mincuts. }
	\label{fig:egmincut}
\end{figure}
\label{eg:mincut}

\end{example}

\begin{definition}
Let $X=\{X_1, \, X_2, \, \ldots \, X_i\}$ be the set of all mincuts of a simple connected graph $G$. Represent each of the $X_i$ with a vertex $v_i$ such that two vertices $v_i$ and $v_j$ are adjacent if $X_i\cap X_j \neq \emptyset$, and call this intersection graph the \emph{mincut graph} of $G$, denoted by $X(G)$.
\label{def:XG}
\end{definition}

\begin{lemma}
	Let $G$ be a disconnected graph, then $X(G)$ is the null graph, $K_0$, with no vertices or edges.
	\label{lem:nullgraph}
\end{lemma}
\begin{proof}
	Since $G$ is disconnected, $\lambda(G)=0$ and $G$ has no edge-cut. Hence $X(G)$ has no vertices and consequently no edges.
\end{proof}

\begin{theorem}
	Let $G$ be a connected graph with $n$ vertices and minimum edge-cut number $\lambda (G)=k, \,k\geq 1$. Then $X(G)$, the mincut graph of $G$, is unique.
	\label{th:XGunique}
\end{theorem}
\begin{proof}
	Since $k \geq 1$ we have to remove at least one edge to disconnect the graph. Thus we know there is at least one edge-cut and hence at least one mincut. Thus, by Definition \ref{def:XG}, $X(G)$ exists and has at least one vertex.
	
	Let $X=\{X_1, \, X_2, \, \ldots \, X_i\}$ be the set of all mincuts of a simple connected graph $G$. Since $k \geq 1$ we know that $X$ is non-empty. Let $X'(G)$ and $X''(G)$ be two mincut graphs of $G$. By the definition of a mincut graph we have $V(X'(G))=V(X''(G))$ corresponding to $X$. Since two vertices in the mincut graph of $G$ are adjacent if their corresponding mincuts have non-empty intersection, we must have $v_iv_j\in E(X'(G))$ if and only if $v_iv_j\in E(X''(G))$ for all $v_i, \,v_j \in V(X'(G))$ and $v_i, \,v_j \in V(X''(G))$. Therefore $X'(G)\cong X''(G)$ and we conclude that $X(G)$ is unique.
\end{proof}

We now discuss the mincut graphs of some well-known families of graphs. Proofs of the constructions are straightforward and we merely outline them here.

For a tree $T_n$ every edge is a bridge and $\lambda(T_n)=1$. Each of the $n-1$ edges of $T_n$ is a mincut and hence $X(T_n)$ has $n-1$ vertices. But none of the singleton mincuts intersect and thus $X(T_n)\cong \overline{K_{n-1}}$, the empty graph (no edges) on $n-1$ vertices.

For the complete bipartite graph $K_{m,n}$ with vertex partition sets of size $m$ and $n$, respectively, such that $n>m>1$, $X(K_{m,n})\cong \overline{K_n}$. This follows from the fact that the mincuts of $K_{m,n}$ are exactly the $m$ edges incident on each of the $n$ vertices of degree $m$ in the larger of the two vertex partitions. Since $K_{m,n}$ is bipartite none of these intersect and we have a mincut graph with $n$ vertices and no edges. If $m=1$ we simply have a tree.

Let $W_n$ be the wheel on $n$ vertices, $n>4$, then, for any $W_n$, $\lambda(W_n)=3$ and the mincuts are exactly the trivial cuts with edges incident on every vertex $v_i$ on the ``rim'' of the wheel. By labeling the $n-1$ vertices on the rim in sequence, we see that every mincut $X_i$ has non-empty intersection with $X_{i-1}$ and $X_{i+1}$, thus giving $X(W_n)\cong C_{n-1}$.

Recall that the line graph, $L(G)$, of a graph $G$ has the edges of $G$ as its vertices, such that two vertices in $L(G)$ are adjacent if their corresponding edges in $G$ have a vertex in common.

For $C_n$, the cycle on $n$ vertices, $X(C_n)\cong L(K_n)$, the line graph the complete graph on $n$ vertices. Given that the edge connectivity of $C_n$ is $\lambda(C_n)=2$, and any choice of two edges is a mincut, the set of all mincuts, $X$, of $C_n$ is the set of all two element subsets of $E(C_n)$, where $|E(C_n)|=n$. But the vertex set of $L(K_n)$ is the set of all two element subsets of the $n$-element vertex set of $K_n$, the edges of $K_n$, and the proof follows.

\begin{example}
	In Figure \ref{fig:mincutgraphs}, we show some examples of the mincut graphs of some of the graphs dealt with in the preceding section.
\end{example}

\begin{figure}[!h]
	\begin{center}
		\includegraphics[width=\textwidth]{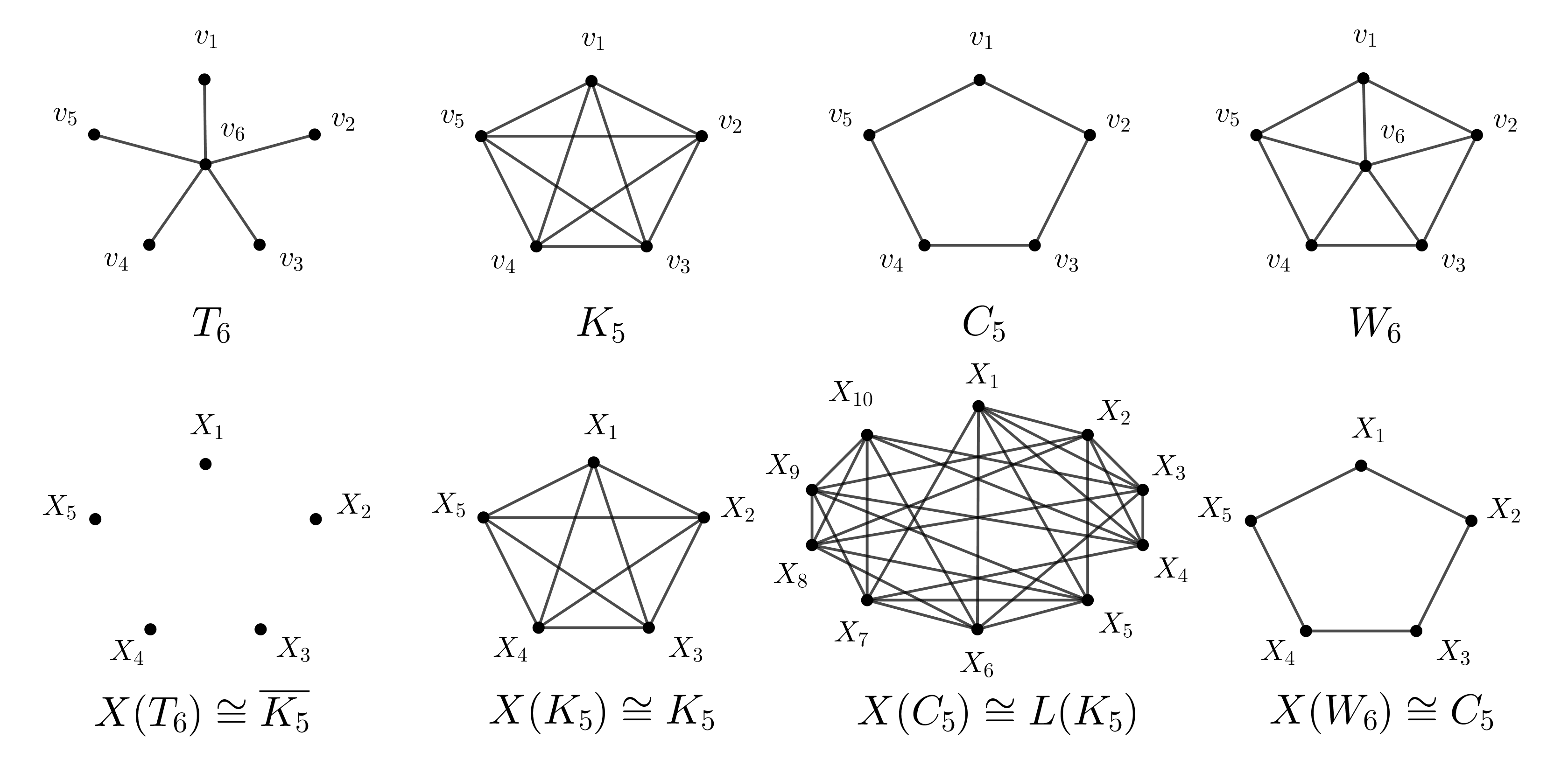}
	\end{center}
	\caption{Mincut graphs of some well-known classes of graphs.}
	\label{fig:mincutgraphs}
\end{figure}

\section{Super edge-connected graphs}
\label{sec:superedge}
In this section, we define super-edge connected, or super-$\lambda$ graphs and give a sufficient condition for a mincut graph of a graph $G$ to be isomorphic to $G$.

\begin{definition}
A graph $G$ is \emph{maximally edge connected} when $\lambda=\delta$, where $\lambda$ is the cardinality of a minimum edge-cut and $\delta$ is the minimum vertex degree of $G$.
\label{def:maxedge}
\end{definition}

\begin{definition}
A  graph $G$ is \emph{super-$\lambda$} if every minimum edge-cut set is \emph{trivial}; that is, consists of the edges incident on a vertex of minimum degree.
\label{def:superlambda}
\end{definition}

The following proposition states three of five sufficient conditions for a graph to be super-$\lambda$, see \cite{handbookgraphth}. The first two conditions were proved by Lesniak in 1974, see \cite{lessuperl}.

\begin{proposition}
	\label{prop:suffsuperl}
	Let $G$ be a graph of order $n$, minimum degree $\delta$ and maximum degree $\Delta$. Then $G$ is super-$\lambda$ if any of the following conditions are satisfied:
	\begin{enumerate}
		\item Let $G\neq K_{n/2}\Box K_2$, the cartesian product of $K_{n/2}$ and $K_2$. If for any non-adjacent $u,v\in V(G)$, $\textrm{deg}u+\textrm{deg}v\geq n,$	then $G$ is super-$\lambda$.
		
		\item If for any non-adjacent $u,v\in V(G)$, $\textrm{deg}u+\textrm{deg}v\geq n+1,$	then $G$ is super-$\lambda$.
		
		\item If $\delta \geq \lfloor \frac{n}{2} \rfloor +1$, then $G$ is super-$\lambda$.
		
		
	\end{enumerate}
	
\end{proposition}

\begin{proposition}
If $G$ is $r$-regular and super-$\lambda$, then $X(G)\cong G$.
\label{prop:X(G)=G}
\end{proposition}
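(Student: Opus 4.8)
The plan is to exhibit an explicit bijection between $V(G)$ and $V(X(G))$ and check that it preserves adjacency in both directions. First I would pin down what the mincuts of $G$ actually are. Since $G$ is $r$-regular, $\delta(G)=r$; and since $G$ is super-$\lambda$ it is in particular maximally edge-connected, so $\lambda(G)=\delta=r$. By Definition \ref{def:superlambda} every mincut of $G$ is trivial, i.e.\ consists of the $r$ edges incident on some vertex of minimum degree. But in an $r$-regular graph \emph{every} vertex has degree $r$, so the mincuts of $G$ are precisely the vertex-stars $E_v=\{\,e\in E(G): e \text{ is incident on } v\,\}$ as $v$ ranges over $V(G)$. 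This identifies $V(X(G))$ with the family of vertex-stars of $G$.

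Next I would define $\phi\colon V(G)\to V(X(G))$ by $\phi(v)=E_v$ and verify it is a bijection. Surjectivity is immediate from the previous step, since every mincut has the form $E_v$. For injectivity, suppose $u\neq v$ but $E_u=E_v$; then every edge incident on $u$ is also incident on $v$. Once $r\ge 2$, vertex $u$ has an incident edge $uw$ with $w\neq v$, and this edge is not incident on $v$ (its endpoints are $u$ and $w$, neither equal to $v$), a contradiction. Hence $\phi$ is injective whenever $r\ge 2$.

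The heart of the argument is the adjacency equivalence. The vertices $\phi(u)$ and $\phi(v)$ are adjacent in $X(G)$ exactly when $E_u\cap E_v\neq\emptyset$. Because $G$ is simple, any edge in $E_u\cap E_v$ is an edge having both $u$ and $v$ as endpoints, of which there is at most one, namely $uv$. Thus $E_u\cap E_v\neq\emptyset$ if and only if $uv\in E(G)$, i.e.\ if and only if $u$ and $v$ are adjacent in $G$. Combining this with the bijection $\phi$ shows that $\phi$ is a graph isomorphism, so $X(G)\cong G$.

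I expect the main obstacle to be bookkeeping rather than conceptual. The two points that must be nailed down are that the correspondence $v\mapsto E_v$ is genuinely a \emph{bijection}, and that the super-$\lambda$ hypothesis is exactly what rules out any \emph{non}-trivial mincuts that would otherwise clutter $V(X(G))$. The one place injectivity can fail is the degenerate case $r=1$, which by connectedness forces $G\cong K_2$; there the two vertex-stars coincide, so $X(K_2)$ is a single vertex and the conclusion breaks. I would therefore flag the standing assumption $r\ge 2$ (or exclude $K_2$ explicitly). Once these are settled, the adjacency check is a one-line consequence of the simplicity of $G$.
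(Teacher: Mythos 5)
Your proof is correct and takes essentially the same route as the paper's: identify the mincuts of $G$ as exactly the vertex-stars $E_v$ (using regularity together with the super-$\lambda$ hypothesis), and observe that $v\mapsto E_v$ is an adjacency-preserving bijection, with simplicity of $G$ giving $E_u\cap E_v\neq\emptyset$ if and only if $uv\in E(G)$. Your flag of the degenerate case $r=1$, where $G\cong K_2$, the map $v\mapsto E_v$ fails to be injective, and $X(K_2)\cong K_1\not\cong K_2$, is a genuine refinement: the paper's terser proof silently overlooks this exception, which its own Corollary \ref{corol:Xsuperl} only sidesteps by assuming $n>2$.
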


\begin{proof}
Since $G$ is super-$\lambda$ the mincuts of $G$ are the edges incident on vertices of minimum degree and since $G$ is regular this is true for every vertex. Hence, there is a one-to-one correspondence between the vertices of $X(G)$ and the vertices of $G$ and the adjacencies are preserved. 
\end{proof}

\begin{corollary} Let $K_n$ be the complete graph on $n$ vertices, $K_{n,n}$ the complete bipartite graph with equal vertex partitions and $L(K_n)$ the line graph of the complete graph. If $n>2$, then
\begin{enumerate}
\item $X(K_n)\cong K_n$
\item $X(K_{n,n})\cong K_{n,n}$
\item $X(L(K_n))\cong L(K_n).$
\end{enumerate}
\label{corol:Xsuperl}
\end{corollary}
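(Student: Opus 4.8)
The plan is to deduce all three statements from Proposition \ref{prop:X(G)=G} by checking, in each case, that the graph is regular and super-$\lambda$. Indeed $K_n$ is $(n-1)$-regular, $K_{n,n}$ is $n$-regular, and $L(K_n)$ is $2(n-2)$-regular (the vertex $\{i,j\}$ of $L(K_n)$ is adjacent to the $n-2$ edges through $i$ and the $n-2$ edges through $j$). Regularity is immediate, so the entire content of the corollary lies in verifying that for $n>2$ the only minimum edge-cuts are the trivial ones, i.e.\ the edges incident on a single vertex. Once this is established in each case, Proposition \ref{prop:X(G)=G} applies verbatim and yields the three isomorphisms.

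For $K_n$ I would argue by counting the edges across a vertex bipartition. A subset $A\subseteq V(K_n)$ with $|A|=k$ and $1\le k\le n-1$ determines an edge-cut of size $k(n-k)$. This is minimized at $k=1$ (or $k=n-1$), giving the trivial cut of size $n-1=\lambda(K_n)$, while for $2\le k\le n-2$ one has $k(n-k)\ge 2(n-2)>n-1$ whenever $n>3$ (the case $n=3$ being checked by hand). Hence every minimum edge-cut isolates a single vertex, so $K_n$ is super-$\lambda$. The same device handles $K_{n,n}$: writing $a$ and $b$ for the numbers of vertices of the two partition classes lying on one side of a bipartition, the number of crossing edges is $f(a,b)=n(a+b)-2ab$, which is invariant under passing to the complementary side. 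A short analysis shows that the minimum of $f$ over non-trivial bipartitions is $n=\lambda(K_{n,n})$, attained exactly when one side is a single vertex, while every other bipartition contributes at least $2n-2$ edges; since $2n-2>n$ precisely when $n>2$, this both proves super-$\lambda$ and explains the hypothesis $n>2$ (for $n=2$ one has $K_{2,2}\cong C_4$, whose opposite-edge cut is non-trivial).

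The graph $L(K_n)$ is where the real work lies, and I expect it to be the main obstacle. Here a set $S$ of $s$ vertices of $L(K_n)$ is a set of $s$ edges of $K_n$, and the number of edges induced on $S$ equals $\sum_{v}\binom{d_S(v)}{2}$, where $d_S(v)$ counts the edges of $S$ meeting the vertex $v$ of $K_n$ and $\sum_v d_S(v)=2s$. The edge-boundary of $S$ is then $2(n-2)s-2\sum_v\binom{d_S(v)}{2}$, so super-$\lambda$ amounts to the edge-isoperimetric inequality $\sum_v\binom{d_S(v)}{2}<(n-2)(s-1)$ for every $S$ with $2\le s\le \tfrac12\binom{n}{2}$. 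I would establish this by bounding $\sum_v\binom{d_S(v)}{2}$ over the extremal clique- and star-like configurations of edges in $K_n$ and checking that the inequality is strict for $n\ge 4$, the case $n=3$ reducing to $L(K_3)\cong K_3$, which is already covered. Controlling this isoperimetric bound uniformly in $s$ is the delicate step; alternatively one could invoke a known super-$\lambda$ criterion for the Johnson graph $J(n,2)$. With super-$\lambda$ in hand, Proposition \ref{prop:X(G)=G} completes the proof.
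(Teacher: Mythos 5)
You follow exactly the same route as the paper: check that each graph is regular and super-$\lambda$, then invoke Proposition \ref{prop:X(G)=G}. The paper's own proof stops at the assertion that for $n>2$ ``all three graphs are clearly regular and super-$\lambda$,'' so your bipartition counts actually supply detail the paper omits, and for the first two graphs they are correct and complete. (The principle you use tacitly, that every minimum edge-cut of a connected graph is the edge-boundary of some vertex set, is standard: a minimum cut is a minimal cut, hence equals the set of edges leaving a component of its removal.) For $K_n$ the count $k(n-k)$ and for $K_{n,n}$ the count $n(a+b)-2ab$, with non-trivial cuts of size at least $2(n-2)$ and $2n-2$ respectively, do prove super-$\lambda$-ness for $n>3$ resp.\ $n>2$, and your handling of the boundary cases $n=3$ and $K_{2,2}\cong C_4$ is right.

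The genuine gap is part (3), as you yourself flag. You correctly translate super-$\lambda$-ness of $L(K_n)$ into the inequality $\sum_v\binom{d_S(v)}{2}<(n-2)(s-1)$ for every set $S$ of $s$ edges of $K_n$ with $2\le s\le\frac{1}{2}\binom{n}{2}$, but you never prove it: ``bounding over the extremal clique- and star-like configurations'' presupposes an Ahlswede--Katona-type extremal theorem that you neither state nor verify, and the alternative of citing a known super-$\lambda$ criterion for $J(n,2)$ is likewise not carried out. As written, the hardest third of the corollary is unestablished. The gap can, however, be closed elementarily, avoiding extremal machinery: since two edges of $K_n$ share at most one vertex, each boundary edge of $S$ in $L(K_n)$ corresponds to a unique vertex $v$ of $K_n$ together with an $S$-edge and a non-$S$-edge at $v$, whence $\partial S=\sum_v d_S(v)\bigl(n-1-d_S(v)\bigr)$. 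Any vertex with $1\le d_S(v)\le n-2$ contributes at least $n-2$, so if three or more such vertices exist then $\partial S\ge 3(n-2)>2(n-2)=\lambda$. If at most two exist, then either no vertex has $d_S(v)=n-1$, in which case all edges of $S$ lie on at most two vertices and $s\le 1$; or some vertex is full, in which case every vertex has $d_S(v)\ge 1$, all but at most two vertices are full, and $S$ omits at most one edge of $K_n$, so $s\ge\binom{n}{2}-1$. Hence every cut with $2\le s\le\binom{n}{2}-2$ strictly exceeds $\lambda$, which is exactly super-$\lambda$-ness; substituting this for your unproved isoperimetric step completes the proof.
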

\begin{proof}
If $n=2$, $K_2$ is a tree, $K_{2,2}\cong C_4$ and $L(K_2)\cong K_1$. If $n>2$ all three graphs are clearly regular and super-$\lambda$.
\end{proof}

\section{When is a graph a mincut graph}

In this section, we address the question of which graphs are mincut graphs, that is, which graphs lie in the image of the mincut operator, which we will refer to as the $X$-operator and denote as $X(\cdot)$. We show that every graph is a mincut graph, by constructing a super-graph $G^*$ from a given graph $G$ such that $G$ is the mincut graph of $G^*$. Furthermore, we show that every graph has an infinite number of $X$-roots. For more on the concepts and definitions on graph operators used here see Prisner in \cite{grdynamics}.

\begin{definition}
	A graph operator is a mapping $\phi$ which maps every graph $G$ from some class of graphs to a new graph $\phi(G)$.
	\label{def:graphop}
\end{definition}

We define the operator recursively, such that, if $\phi$ is an operator and $k$ a positive integer, then $\phi^1=\phi$ and $\phi^k(G)=\phi(\phi^{k-1}(G))$, for $k\geq 2$, see \cite{grdynamics,prissurv}. Since an operator, $\phi$, is a mapping on the set of finite graphs into itself, it is natural to ask which graphs lie in the image of $\phi$ and, given $\phi(G)$, are we able to determine $G$?

Every graph is an intersection graph, see \cite{graphreps, grtheory}. We show that every graph is the mincut graph of not just one graph, but more. It is possible to construct an infinite family of non-isomorphic graphs that map to the same mincut graph.

\begin{lemma} Let $G$ be a super-$\lambda$ graph with $H\subseteq G$ the induced subgraph on the set of vertices of $G$ such that $V(H)=\{v\in V(G)| deg(v)=\delta(G) \}$, then $H\cong X(G)$.
	\label{lem:XisoHsupersub}
\end{lemma}
\begin{proof}
	Let $x_i \in V(X(G))$ be the vertex in $X(G)$ corresponding to the mincut $X_i\in X$, the set of all mincuts in $G$. The mincuts in $G$ are exactly the incident edge-sets on the vertices $v\in V(H)$ and hence $|X|=|V(H)|$. Thus for every $v_i\in V(H)$ there is exactly one $x_i \in V(X(G))$. Furthermore, if $v_iv_j\in E(H)$ then $X_i\cap X_j \neq \emptyset$, where $X_i$ and $X_j$ are the incident edge sets on $v_i$ and $v_j$. If $v_iv_j \notin E(H)$ then $X_i \cap X_j = \emptyset$, since they are the incendent edge-sets on $v_i$ and $v_j$. Hence $x_ix_j \in E(X(G))$ if and only if $v_iv_j \in E(H)$ and isomorphism follows.
\end{proof}

\begin{example}
	The graph $G$ in Figure \ref{fig:lemmaeg} is super-$\lambda$, by condition $1$ of Proposition \ref{prop:suffsuperl} with vertices of minimum degree $v_1, \, v_2, \, v_3, \textrm{ and } v_4$ and $X(G) \cong K_{1,3}$ is the induced subgraph on vertices $v_1 \ldots v_4$.
	
	\begin{figure}[!h]
		\begin{center}
			\includegraphics{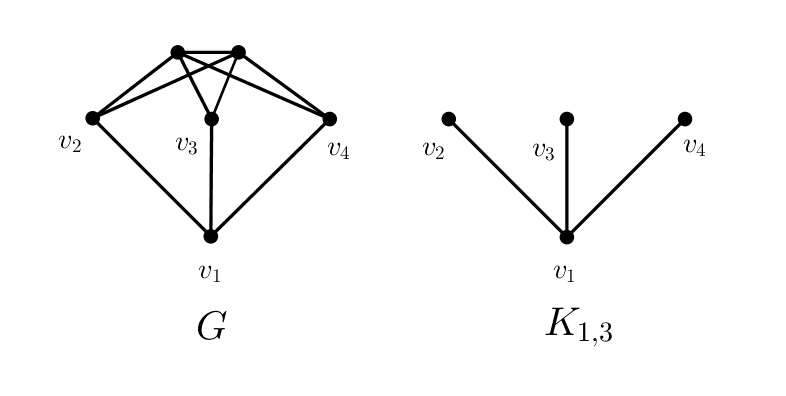}
		\end{center}
		\caption{ }
		\label{fig:lemmaeg}
	\end{figure}
	
\end{example}

\begin{corollary}
	If $G$ is super-$\lambda$ and $r$-regular, $G\ncong K_2$, then $X(G)\cong G$.
	\label{corol:XGisGifsuper-r}
\end{corollary}
\begin{proof}
	 Let $H\subseteq G$ be the induced subgraph on the set of vertices of $G$ such that $V(H)=\{v\in V(G)| deg(v)=\delta(G) \}$. Since $G$ is $r$-regular and super-$\lambda$, ${V(H)=V(G)}$ and the edge-set incident on \emph{every} vertex is a mincut. Hence, if $v_iv_j \in E(G)$, then $X_i\cap X_j \neq \emptyset$ and $x_ix_j \in E(X(G))$. We exclude $K_2$ since $K_2\cong T_2$ and $X(T_2)\cong K_1$ as we see in Section \ref{sec:mincutgraph}.
\end{proof}

\begin{definition}
	The \emph{join} $G=G_1 \vee G_2$ of two graphs $G_1$ and $G_2$ has vertex set $V(G_1)\cup V(G_2)$ and edge set $E(G_1)\cup E(G_2)\cup \{uv|u \in V(G_1), \, v \in V(G_2)\}$.
	\label{def:join}
\end{definition}

In other words, the join of two graphs joins every vertex in the one graph to every vertex in the other graph. Thus, for example, we can say $K_n=K_1 \vee K_{n-1}$ and $K_{m,n}=\overline{K_m} \vee \overline{K_n}$. If one of the graphs is $K_1$ the operation is also called the \emph{vertex join} of the graph.

\begin{lemma}
	Let $G$ be an $r$-regular graph of order $n>r+1$. Let $G^*=K_1 \vee G$, then $X(G^*)\cong G$.
	\label{lem:joinregular}
\end{lemma}

\begin{proof}
	The join $K_1\vee G$ yields a graph with one vertex of degree $deg(v)>r+1$ connected to every other vertex which is now of degree $r+1$. If $G$ is super-$\lambda$ then $G^*$ is super-$\lambda$, since we have added one vertex of higher degree than all the others, and $G^*[V(G)]$ is the induced subgraph on vertices of degree $\delta$. Then, by Lemma \ref{lem:XisoHsupersub}, $X(G^*)\cong G$.
	Suppose $G$ is not super-$\lambda$ and $X$ is a non-trivial mincut of $G$ with $\langle Y, \, \bar{Y} \rangle$ the two components of $G-X$, then in $G^*$ every vertex in $Y$ and $\bar{Y}$ is adjacent to the added vertex and hence $G-X$ is no longer disconnected. Hence $X$ is no longer a mincut and we conclude that $G^*$ is super-$\lambda$ with $G^*[V(G)]$ the induced subgraph on vertices of degree $\delta$ and by Lemma \ref{lem:XisoHsupersub}, $X(G^*)\cong G$.
\end{proof}

\begin{theorem} Every graph $G$ is the mincut graph of an infinite family of graphs.
	\label{th:AllGXG}
\end{theorem} 
\begin{proof}
	Let $G$ be a graph of order $p$ with minimum vertex degree $\delta(G)$ and maximum vertex degree $\Delta(G)$. We construct a super-$\lambda$ super graph $G^*$ such that $G\subset G^*$ is the induced subgraph on the vertices of $G^*$ of minimum degree, that is, such that $V(G)=\{v\in V(G^*)| deg(v)=\delta(G^*)\}$. Then, by Lemma \ref{lem:XisoHsupersub}, $G\cong X(G^*)$.
	
	Suppose $G$ is not $r$-regular. We start the construction with $G\cup K_m, \, m=\Delta(G)-\delta(G)$, the disjoint union of $G$ and a complete graph $K_m$. The choice of $m$ is the minimum number of vertices necessary to add edges between the two graphs so that $deg_{G^*}(v)=\Delta(G)$ for all $v\in V(G)$. We add edges between the graphs and increase $m$ where necessary until the following three conditions are met:
	\begin{enumerate}
		\item $deg_{G^*}(v)=k$ for all $v\in V(G)$,
		\item $deg_{G^*}(v)>k$ for all $v\in K_m$,
		\item $deg(u)+deg(v)\geq n+1$ for all non-adjacent vertices $u,v \in V(G^*)$, where $|V(G^*)|=n$.
	\end{enumerate}
	
	Then $G^*$ is super-$\lambda$ by condition $2$ of Proposition \ref{prop:suffsuperl} and $G$ is the induced subgraph on vertices of minimum degree in $G^*$. That is, $V(G)=\{v\in V(G^*)| deg(v)=\delta(G^*)\}$ and $\lambda(G^*)=deg_{G^*}(v), \,v\in V(G)$. Hence, by Lemma \ref{lem:XisoHsupersub} $G\cong X(G^*)$. We note that, by condition 1 of Proposition \ref{prop:suffsuperl}, if $G^*\ncong K_{\frac{n}{2}}\Box K_2$ we can have $deg(u)+deg(v)\geq n$ for all non-adjacent vertices $u$ and $v$.
	
	If $k+k<n+1$ add edges between $G$ and $K_m$ until $2k\geq n+1$. This ensures that condition $3$ is met for all non-adjacent $u,v\in V(G)$. If $k+m-1<n+1$ increase $k$, and $m$ if necessary, until we know that the graph is super-$\lambda$. Then increase $m$ to ensure that condition $2$ eventually applies, as clarified in Example \ref{eg:pawX}.  

	If $G$ is $r$-regular of order $p>r+1$, then our construction simply needs $G^*=K_1 \vee G$, and $X(G^*)\cong G$, by Lemma \ref{lem:joinregular}. 
	
	If $G$ is $r$-regular, but $p\leq r+1$ then $p-1\leq r$. But the maximum degree of a vertex in any simple graph of order $p$ is $p-1$ so $G$ must be complete. We start with $m=2$ and join all the vertices of $G$ to $K_m$. Then simply increase $m$ until $m>p+2$ without adding any new edges between $G$ and $K_m$. Both $G$ and $K_m$ are super-$\lambda$ and we have added $2p$, more than $\delta(G^*)$, edges between them.
	
	If $G$ is not connected we follow the same procedure, connecting each component of $G$ to $K_m$.
	
	Once we have $G^*$ super-$\lambda$ with $V(G)=\{v\in V(G^*)| deg(v)=\delta(G^*)\}$ we can keep increasing $m$ and still have a graph that fulfils the conditions of Lemma \ref{lem:XisoHsupersub} since we are only adding trivial edge cuts but they are not mincuts. Thus for any graph $G$ we can construct an infinite family of graphs $G^*$ such that $X(G^*)\cong G$.
\end{proof}


\begin{example}
	We illustrate the construction of an infinite family of graphs $\mathcal{F}(G^*)$ such that $X(G^*)\cong K_{1,3}$. Please refer to Figure \ref{fig:starX}.
	\label{eg:starX}
	We have $\Delta(K_{1,3})=3, \, \delta(K_{1,3})=1$ and the order of $K_{1,3}$ is $p=4$. We let $m=3-1=2$ and add edges between $K_{1,3}$ and $K_2$ until the vertices $\{v_1 \ldots v_4 \}$ are of degree $3$, as shown in graph $G^*_1$ in Figure \ref{fig:starX}. Now we have $n=6$ and $k+k=6$. However, since $G^*_1\neq K_3\Box K_2$, the Cartesian product of the two graphs, condition $1$ of Proposition \ref{prop:suffsuperl} tells us that it is sufficient for $deg(u)+deg(v)\geq n$ for $G^*$ to be super-$\lambda$ in this case. Hence, by now increasing $m$, for any $m\geq 5$ as in $G^*_2$ in the figure, $G^*$ would be super-$\lambda$, since the minimum edge cuts in a complete graph are only the trivial cuts and $deg_{G^*}(v)=3$ for all $v\in V(G)$.
	\begin{figure}[!h]
		\begin{center}
			\includegraphics{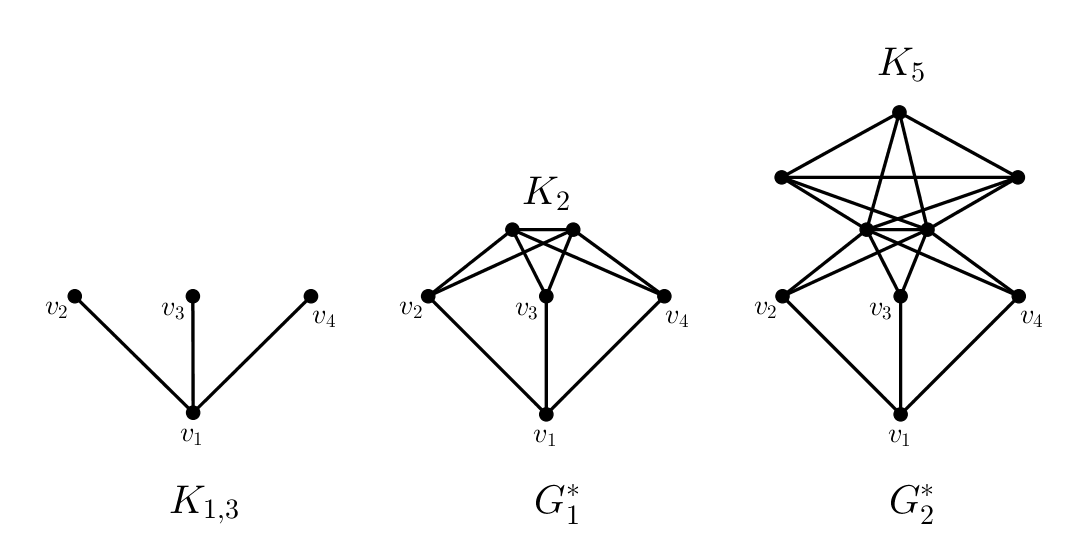}
		\end{center}
		\caption{Graphs $G^*_1$ and $G^*_2$, constructed such that $X(G^*_1)\cong X(G^*_2) \cong K_{1,3}$.}
		\label{fig:starX}
	\end{figure}
\end{example}

In the following example, we demonstrate a case where it is necessary to increase $k$ and consequently $m$ to ensure that $G^*$ is super-$\lambda$ such that $V(G)=\{v\in V(G^*)| deg(v)=\delta(G^*)\}$.

\begin{example}The graph $G$, in Figure \ref{fig:pawX}, formed by the one point join of $C_3$ and $P_2$, is sometimes called ``the Paw''. We have $\Delta(G)=3, \, \delta(G)=1$ and $n=4$. We proceed as in Example \ref{eg:starX} and let $m=2$ and add edges until $deg_{G^*}(v)=3$ for all $v\in V(G)$. We obtain the graph $G^*_1$ in Figure \ref{fig:pawX} with $n=6$ and $deg(u)+deg(v)=6$ for any two non-adjacent vertices. Condition 1 of Proposition \ref{prop:suffsuperl} does not apply here, however, since $G^*_1\cong K_3 \Box K_2$ and in fact the edge set $\{e_1,\,e_2,\, e_3\}$ is a non-trivial mincut. Hence we increase $k$, the degree of each vertex of $G$ in $G^*$, by one in $G^*_2$ which also necessitates increasing $m$ by one. Now $G^*_2$ is super-$\lambda$ since $deg(u)+deg(v)=8$ for any two non-adjacent vertices and $n=7$. The problem remains that one of the vertices in $K_3$ also has degree $4$ and hence is also a mincut, implying $X(G^*)\ncong G$. We increase $m$ to $k+2$ in $G^*_3$ which means that all vertices of $K_m$ in $G^*$ now have degree at least $5$. Also we have added only trivial cuts, since $K_6$ is complete and so we still have a super-$\lambda$ graph with vertices of minimum degree exactly the vertices of $G$. Clearly increasing $m$ so that $m\geq 6$ will have the same result.
	\label{eg:pawX}
\end{example}

\begin{figure}[!h]
	\begin{center}
		\includegraphics{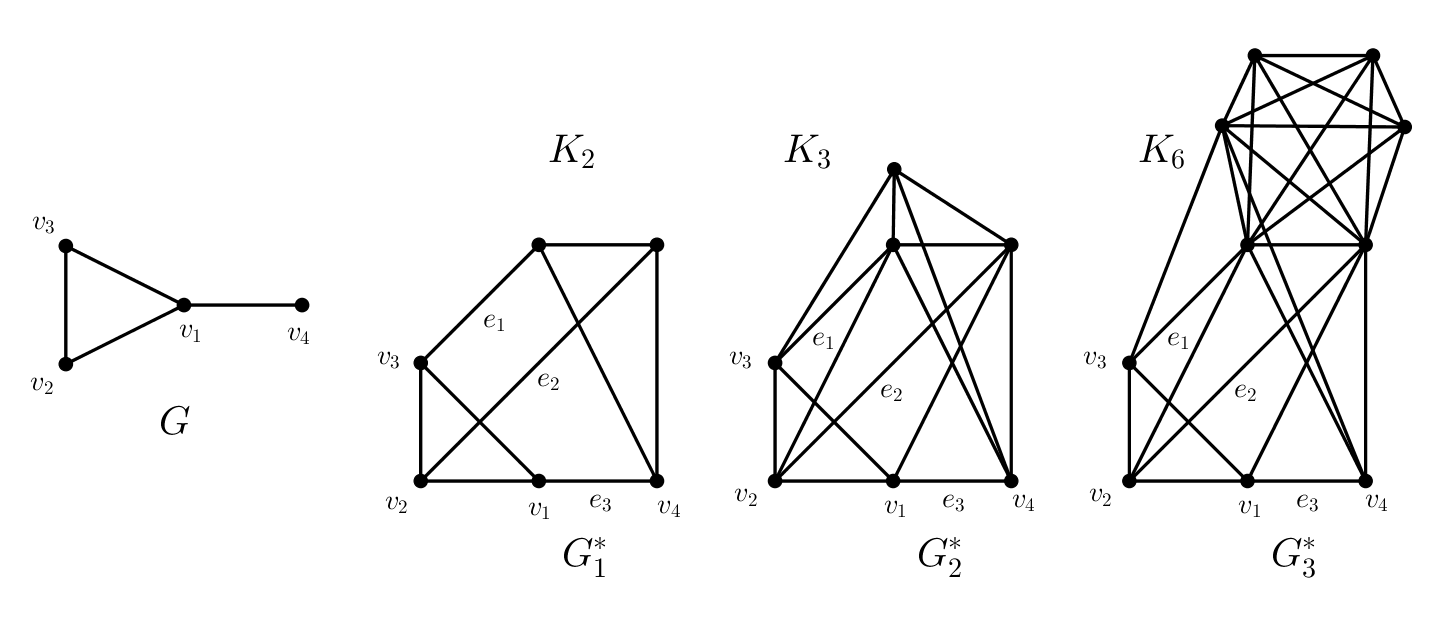}
	\end{center}
	\caption{The ``Paw'' is the mincut graph of a family of graphs $G^*_i$ for $i\geq 3$.}
	\label{fig:pawX}
\end{figure}

\begin{corollary}
	For any connected graph $G$ there is a non-isomorphic graph $H$ such that $X(G)\cong X(H)$.
	\label{corol:Xnoniso}
\end{corollary}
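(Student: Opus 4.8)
The plan is to invoke Proposition~\ref{prop:AllGXG}, but applied to the graph $X(G)$ rather than to $G$ itself. First I would set $M := X(G)$; this is a genuine, well-defined graph by the definition of the mincut graph together with the uniqueness of $X(G)$ noted in the introduction. Since $M$ is a graph (connected or not, the construction covers both cases), Proposition~\ref{prop:AllGXG} applies to it and produces, for each admissible parameter $m$ of the auxiliary complete graph $K_m$, a connected supergraph $H_m \supseteq M$ with $\lambda(H_m)$ equal to $\Delta(M)$ (or $\Delta(M)+1$ in the exceptional case) and $X(H_m) \cong M = X(G)$.

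Next I would check that this family contains graphs not isomorphic to $G$, which is where the only real content lies. The construction attaches a copy of $K_m$ to $M$ and adds connecting edges, so $|V(H_m)| = |V(M)| + m$, where $|V(M)|$ is just the number of mincuts of $G$ and is therefore fixed. Choosing $m$ larger than $|V(G)| - |V(M)|$ (and large enough that $\lambda(K_m) > \Delta(M)$, as the proposition requires) forces $|V(H_m)| > |V(G)|$. Since graphs with different numbers of vertices cannot be isomorphic, the resulting graph $H := H_m$ satisfies $H \not\cong G$, while $X(H) \cong M = X(G)$ holds by construction. This $H$ is exactly the required non-isomorphic graph, and the same reasoning shows that in fact infinitely many pairwise non-isomorphic graphs share the mincut graph $X(G)$, since distinct values of $m$ give distinct vertex counts.

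I do not anticipate a genuine obstacle: all of the hard work is already carried out in Proposition~\ref{prop:AllGXG}, and the corollary is essentially the observation that its construction is not rigid. The one place meriting a line of care is when $G$ is small or coincides with one of the families treated in Sections~\ref{sec:XGfamilies} and~\ref{sec:superedge}; there one simply notes that the vertex-count comparison above still forces $H_m$ onto strictly more vertices than $G$, so non-isomorphism is guaranteed uniformly. Example~\ref{eg:starX}, in which $H$ and $H_1$ are non-isomorphic yet both have mincut graph $K_{1,3}$, already illustrates the phenomenon concretely.
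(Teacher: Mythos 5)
Your proposal is correct and is essentially the paper's own argument: the paper states Corollary \ref{corol:Xnoniso} without any explicit proof, presenting it as an immediate consequence of Proposition \ref{prop:AllGXG}, and the intended reasoning is exactly your step of applying that proposition to $M = X(G)$ to obtain supergraphs $H \supseteq X(G)$ with $X(H) \cong X(G)$. Your vertex-count argument (choosing $m$ large enough that $|V(H)| > |V(G)|$, so $H \not\cong G$) simply makes explicit the non-isomorphism claim that the paper leaves tacit.
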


\section{Roots and depth}
\label{sec:rootsdepth}

In this section, we start to explore some further ideas around the mincut operator. Since we have shown that every graph is a mincut graph we have some immediate consequences for the \emph{roots} and \emph{depth}, see \cite{grdynamics, prissurv} of any graph under $X(\cdot)$.

\begin{definition}
	For a given graph operator $\phi$, a $\phi$-\emph{root} of a graph $G$ is any graph $H$ with $\phi(H)\cong G$.
	\label{def:root}
\end{definition}
 
With respect to roots and iteration we also have the following definition of the depth of a graph under some operator.

\begin{definition}
	For a graph $G$ and an operator $\phi$ we define the $\phi$-depth, $depth(G)$, as the largest integer (if there is one, otherwise $\infty$) for which there is some graph $H$ such that $\phi^d(H)\cong G$.
	\label{def:depth}
\end{definition}

\begin{proposition}
	Every graph $G$ has an infinite number of $X$-roots.
	\label{prop:infXroots}
\end{proposition}

\begin{proof}
	The proof follows directly from the construction in Theorem \ref{th:AllGXG}. Since our construction starts with $G\cup K_m$ with appropriate choice of $m$, it follows than any choice of $K_n$ with $n>m$ will have the desired mincut graph.
\end{proof}

\begin{proposition}
	Every graph $G$ has infinite $X$-depth.
	\label{prop:infXdepth}
\end{proposition}

\begin{proof}
	By Theorem \ref{th:AllGXG}, every graph is the mincut graph of a family of graphs. Therefore, given a graph $G_0$, there is some graph $G_{(-1)}$ such that $X(G_{(-1)})\cong G_0$, where the value of the subscript is merely to indicate position in a sequence. By the same reasoning, there is a graph $G_{(-2)}$ such that $X(G_{(-2)})\cong G_{(-1)}$. Thus we can construct a set $\{\ldots,\, G_{(-3)}, \, G_{(-2)}, \, G_{(-1)}, \, G_0\}$ where every $X(G_{(-n)})\cong G_{(-n+1)}$ for all $n\geq 1$ and $X^n(G_{(-n)})\cong G_0$.
\end{proof}

\begin{example}
	We refer to the graphs in Figure \ref{fig:Xiter}. Each sequence represents a number of iterations under $X(\cdot)$. We know that $X(P_4)\cong \overline{K_3}$ since it is basically a tree on four vertices, and since $\overline{K_3}$ is disconnected $X(\overline{K_3})\cong K_0$ by Lemma \ref{lem:nullgraph}. The graph $G_1$ was obtained from $P_4$ by the construction in Theorem \ref{th:AllGXG} using $P_4\cup K_2$. We note that any graph with exactly three mincuts that do not intersect is also a root of $\overline{K_3}$ and any disconnected graph is a root of $K_0$. In the second iteration sequence we have $C_4$ as a root of $L(K_4)$. We could have constructed a root using our construction method from Theorem \ref{th:AllGXG} or, since line graphs of complete graphs are fixed under $X(\cdot)$ we could simply have $L(K_4)$. $W_5$ is a root of $C_4$ since $W_5\cong K_1\vee C_4$ and $C_4$ is $2$-regular with $n=4>2+1$. We could apply our construction technique again to $G_1$ and $W_5$ respectively to continue each sequence in reverse and generate an infinite sequence of graphs such that each is an $X$-root of the next. Admittedly our graphs will become very large very soon but it is possible in principle.
	\label{eg:rootsanddepth}
	
	\begin{figure}[!h]
		\begin{center}
			\includegraphics{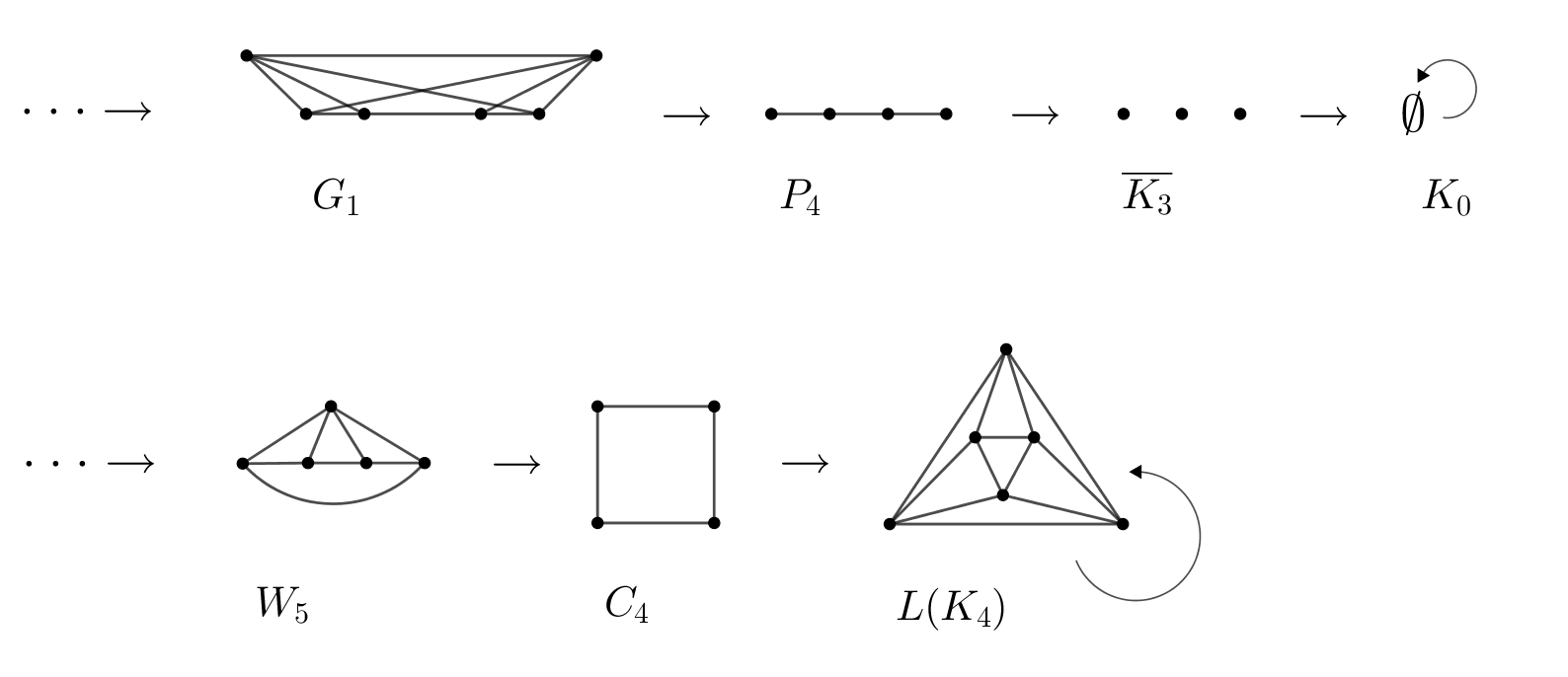}
		\end{center}
		\caption{Graphs with some of their $X$-roots.}
		\label{fig:Xiter}
	\end{figure}
	
\end{example}

\section{Further discussion}

We conclude by introducing and exploring further topics and questions raised by the properties and characteristics of the mincut graph. We also look at the possible implications of certain induced subgraphs in $X(G)$ for a root if that root is not necessarily obtained from the construction process in Theorem \ref{th:AllGXG}.

\subsection{Determination problem}

We know from Whitney that a graph is uniquely determined by its line graph up to isolated vertices, unless it is $C_3$ or $K_{1,3}$, see \cite{prissurv}. Although our construction in Theorem \ref{th:AllGXG} gives the interesting result that every graph is a mincut graph, it unfortunately does not shed much light on what the mincut graph $X(G)$ tells us about the mincuts of $G$ if $G$ is not a graph of the family obtained by our construction. We explore this problem in a little more detail.

\begin{lemma}
	Let $X=\langle A, \,\bar{A}\rangle$ and $Y=\langle B,\, \bar{B}\rangle$ be two non-trivial mincuts of a graph $G$ with $A,\, \bar{A}$ and $B, \, \bar{B}$ the vertex sets of the components of $G-X$ and $G-Y$ respectively. If $X\cap Y \neq \emptyset$ then either $A\cap B\neq \emptyset$ or $A\cap \bar {B}\neq \emptyset$.
	\label{lem:XYcomp}
\end{lemma}

We note that the converse to Lemma \ref{lem:XYcomp} is not necessarily true.

We know, see \cite{chandran,lehel}, that if $X=\langle A, \,\bar{A}\rangle$ and $Y=\langle B,\, \bar{B}\rangle$ are two mincuts of $G$ and $A\cap B\neq \emptyset$ then either $A\subset B$ (or $B\subset A$), that is they are \emph{nested}, or they \emph{overlap}. If $X$ and $Y$ overlap (also called \emph{crossing mincuts}), then $A\cap B$, $\bar{A}\cap B$ and $A\cap \bar{B}$ are \emph{non-empty}. We also note from \cite{lehel} that two mincuts can overlap only if $\lambda$, the minimum edge connectivity number of the graph, is even.

\begin{proposition}[Lehel et al, \cite{lehel}]
	Let $X=\langle A,\bar{A} \rangle$ and $Y=\langle B,\bar{B} \rangle$ be two overlapping mincuts of a connected graph $G$. Then $|\langle \bar{A}\cap \bar{B}, A\cap \bar{B} \rangle |=|\langle \bar{A}\cap \bar{B}, \bar{A}\cap B \rangle |=|\langle A\cap B, A\cap \bar{B} \rangle |=|\langle A\cap B, \bar{A}\cap B \rangle |=\dfrac{\lambda}{2}$. Consequently $A\cup B, \, A\cap B, \, A\cap \bar{B}, \, \bar{A}\cap B$ are mincuts. Moreover, $|\langle \bar{A}\cap \bar{B}, A\cap B \rangle |=|\langle \bar{A}\cap B, A\cap \bar{B} \rangle |=0$.
	\label{prop:crossing}
\end{proposition}

\begin{lemma}[Chandran \& Ram, \cite{chandran}]
	If $X=\langle A,\bar{A} \rangle$ and $Y=\langle B,\bar{B} \rangle$ are a pair of crossing mincuts, then $X\cap Y = \emptyset$.
	\label{lem:crossempty}
\end{lemma}

Let $X=\langle A,\, \bar{A}\rangle$ and $Y=\langle B,\, \bar{B}\rangle$ be overlapping mincuts of a graph $G$ with non-trivial mincuts $W=\langle A\cap B,\, \bar{A} \cup \bar{B} \rangle$, $Z=\langle A\cup B,\bar{A}\cap \bar{B} \rangle$ as per Proposition \ref{prop:crossing}. Suppose that $W\cap X,\,Y$ and $X,\, Y\cap Z$ are non-empty, then we have a corresponding induced cycle $C_4$ in $X(G)$. Given what we know from Proposition \ref{prop:crossing}, it would seem that we should at least be able to construct an induced subgraph in $G$ corresponding to the cycle in $X(G)$ without resorting to the construction in Theorem \ref{th:AllGXG}.

Similarly, let $X=\langle A,\, \bar{A} \rangle$, $Y=\langle B,\, \bar{B} \rangle$ and $Z=\langle C,\, \bar{C}\rangle$ be non-trivial nested mincuts of a graph $G$ with $A\subset B \subset C$. If the induced subgraph in $X(G)$ corresponding to the mincuts $X,\, Y$ and $Z$ is connected then we should have a cycle $C_3$ or at least an induced path $P_3$ in $X(G)$.

It would be interesting to know whether we can have induced cycles larger than $C_4$ in $X(G)$ such that the corresponding mincuts in $G$ are not of the form as per our construction in Theorem \ref{th:AllGXG}.

\subsection{Periodicity of the mincut operator}

\begin{definition}
	Let $\phi$ be an operator on a class of graphs $\Gamma$. A graph $G\in \Gamma$ is \emph{periodic} in $\phi$ if there is some natural number $n$ with $G \cong \phi^n(G)$. The smallest such number is called the \emph{period} of $G$.
	\label{def:periodicity}	
\end{definition}

In Corollary \ref{corol:XGisGifsuper-r}, we showed that it is sufficient for $G\cong X(G)$ if $G$ is super-$\lambda$ and $r$-regular. That is, the graph is fixed under the operator, or has periodicity $1$. In fact, it can be shown that this condition is also necessary.

Furthermore, we know that there are $2$-periodic graphs under $X(\cdot)$. For example, the cartesian product $K_m\Box K_2$ has mincut graph $X(K_m \Box K_2)\cong K_1\vee (K_m\Box K_2)$, the join of itself with $K_1$. But the mincut graph of $K_1\vee (K_m\Box K_2)$ is again $K_m\Box K_2$ which means that iteration of the $X(\cdot)$ operator leads to a cycle of two graphs. A natural question to ask is whether there are other graphs of this periodicity and if so what would be their characteristics. Furthermore, do we have graphs of a higher $X$-periodicity?

\subsection{Convergence or divergence}

In \cite{interchangegr}, Wilf and Van Rooij showed that unless a graph is $K_{1,3}$, $C_n$ or $P_n$, repeated application of the line graph operator eventually leads to a steadily increasing number of vertices, that is, it \emph{diverges} under $L(\cdot)$.

We know that the maximum number of mincuts in a graph of order $n$ is $\binom{n}{2}$ and this bound is tight for simple graphs when $G\cong C_n$, see \cite{dinic,lehel}. We also have other upper bounds on the number of mincuts of a graph depending on whether $\lambda$ is even or odd as mentioned in Section \ref{sec:intro}.

Intuitively, if a graph is to diverge under $X(\cdot)$ both the number of vertices and edges need to increase. If only the number of vertices increases the edge density of the graph decreases and the likelihood of every mincut intersecting with at least one other decreases, converging to the null graph in the end. If only the number of edges increases we eventually end up with a complete graph which is fixed. But for $|V(X(G))|$ and $|E(X(G))|$ to increase and $X(G)$ to remain connected we must have the edge density increasing. Thus we would expect such a graph to eventually become fixed (or at least periodic) under the $X$-operator.

Hence, given the upper bounds on the number of mincuts, that we expect graphs that ``grow'' to eventually become fixed or periodic, that non-intersecting mincuts are disconnected and that the mincut graph of a disconnected graph is the null graph, $K_0$, we have the following two conjectures:

\begin{conjecture}[Convergence]
	Let $G$ be a simple connected graph and $X(\cdot)$ the mincut operator. Then $X^n(G)$ converges for sufficiently large $n$.\\
\end{conjecture}

\begin{conjecture}[Convergence to null graph]
	Let $G$ be a simple connected graph and $X(\cdot)$ the mincut operator. Then $X^k(G)\rightarrow K_0$ for sufficiently large $k$, except under a finite number of conditions.\\
\end{conjecture}

\subsection{Connectivity of $X(G)$}

$X(G)$ will be connected if every mincut has non-empty intersection with at least one other mincut. What are the characteristics of a connected graph that will imply that its mincut graph is connected? Is it possible to find minimum bounds on $\kappa(G), \, \lambda(G),$ and $\delta(G)$, (the vertex connectivity, edge connectivity and minimum degree values of $G$) that will guarantee a connected $X(G)$? Intuitively there should be some relationship between the vertices and the way the edges are distributed (some kind of ``edge density'' function?) and $\lambda(G)$ that will ensure connectivity of $X(G)$.

\bibliographystyle{abbrv}
\bibliography{cutset}
		
\end{document}